\newtheorem{definition}{Definition}
\newtheorem{remark}{Remark}
\newtheorem{theorem}{Theorem}
\newtheorem{lemma}{Lemma}
\newtheorem{assumption}{Assumption}
\newcommand{\norm}[1]{\left\lVert#1\right\rVert}
\newcommand{\bI}{\mathbf{I}}
\newcommand{\bX}{\mathbf{X}}
\newcommand{\bR}{\mathbf{R}}
\title{Stochastic Second-Order Optimization via von Neumann Series}
\author{
	Mojm\'ir Mutn\'y \\
	Department of Mathematics\\
	ETH Z\"urich\\
	Z\"urich, Switzerland\\
	\texttt{mmutny@student.ethz.ch} \\
}
\begin{document}
	
	\maketitle	
	\begin{abstract}
		A stochastic iterative algorithm approximating second-order information using von Neumann series is discussed. We present convergence guarantees for strongly-convex and smooth functions. Our analysis is much simpler in contrast to a similar algorithm and its analysis, LISSA. The algorithm is primarily suitable for training large scale linear models, where the number of data points is very large. Two novel analyses, one showing space independent linear convergence, and one showing conditional quadratic convergence are discussed. In numerical experiments, the behavior of the error is similar to the second-order algorithm L-BFGS, and improves the performance of LISSA for quadratic objective function. 
	\end{abstract}
	
	\section{Introduction}
	
	With the advent of ''Big Data'' age, the need for novel, fast and robust optimization algorithms becomes ever more prevalent. The two most common classes of optimization algorithms are first-order methods which utilize only gradient information, and on the other hand second-order methods, which incorporate curvature information from the objective function. In this paper, we explore the possibility of using the von Neumann approximation series as a tool to iteratively build up second-order information for efficient optimization methods.
	
	Many supervised learning problems can be formulated as empirical risk minimization, 
	\begin{equation}
		F(x) :=  \frac{1}{n}\sum_{i=1}^{n} f(x,z^i,y^i).
	\end{equation}
	where $\{z^i,y^i\}$ are data points lying in $\mathbb{R}^d$, with associated labels $y^i \in \mathbb{R}$. The optimization variable $x \in \mathbb{R}^d$ is the parametrization of the model that one seeks to determine by solving the following unconstrained minimization problem, 
	\begin{equation}
		x^* :=   \arg \min_{x \in \mathbb{R}^d} F(x).
	\end{equation}
	
	We focus on the scenario when the number of data points is much larger than the dimension of the problem i.e. $n \gg d$. In this scenario, the methods that belong to standard optimization routines are SGD \cite{Bottou2010}, dual SDCA \cite{Shalev-Shwartz2013} or more advanced algorithms such as SVRG \cite{Johnson2013}, SAGA \cite{Defazio2014}, SAG \cite{Schmidt2013}, or S2GD \cite{Konecny2014}, which collectively belong to the school of first-order methods. In Big Data optimization, the school of second-order optimization is represented by L-BFGS \cite{Liu1989} algorithm, and many of its varieties. Apart from L-BFGS, recently, ideas of sub-sampling the Hessian formed a plethora of algorithms in literature \cite{Roosta-Khorasani2016,Roosta-Khorasani2016a,Qu2015,Pilanci2015}.
	
	We propose a method where access to unbiased estimates of a Hessian $\{ \mathbf{X}_i\}_{i=1}^n$ is assumed. These estimates can be used to iteratively build the second-order information by exploiting the well known von Neumann series approximation \eqref{eq:von_neumann}. In fact, our method builds asymptotically inverse Hessian in the expectation. One can contrast this approach to SGD, where gradients are sampled, and in expectation true gradient is built. In our analysis, we assume exact gradient information and the Hessian matrix is sampled. However, an extension to inexact first-order information could be feasible as well.
	
	\begin{remark}[\cite{Chen2005}]For a matrix $\mathbf{A} \in \mathbb{R}^{d\times d}$ such that $\mathbf{A} \succeq 0 $ and $\norm{\mathbf{A}}_2 \leq 1$ we have that
		\begin{equation}\label{eq:von_neumann}
			\mathbf{A}^{-1} = \sum_{j=0}^{\infty} (\mathbf{I}-\mathbf{A})^j.
		\end{equation}
	\end{remark}
	
	\subsection{LISSA algorithm}
	A very similar idea to ours has been presented in \cite{Agarwal2016} with slight differences in the algorithm formulation. The main focus of \cite{Agarwal2016} was on averaging the estimators of Hessian to gain better estimation properties utilizing the theory of concentration inequalities. Their analysis of the algorithm requires warm start by gradient descent to prove linear convergence to the optimum. The algorithm LISSA from \cite{Agarwal2016} will not be reviewed here, but essentially it consist of two parts parametrized by two parameters, $S_1$ and $S_2$. The parameter $S_2$ denotes the number of Hessian samples used to build one estimate and $S_1$ denotes number of repetitions of this step. In numerical experiments, however, authors set $S_1 = 1$ corresponding to the absence of averaging. 
	
	\subsection{Contributions}
	In this work, we present two new simplified convergence analyses of the stochastic second-order algorithm that builds approximation of a inverse Hessian using unbiased samples of a Hessian matrix. We show more intuitive convergence rates than a similar algorithm \cite{Agarwal2016} under milder mathematical conditions. Namely, we do not require initial convergence to a closed proximity of an optimum to show linear convergence, and in addition, we show conditional quadratic convergence given a specific condition on a gradient of the objective. Our formulation of the algorithm does not require the notion of averaging, and thus reduces the number of parameters needed to perform the theoretical analysis. Additionally, the theoretical formulation of our algorithm is more closely aligned with its practical execution.
	
	\section{Algorithm}
	In order to define the algorithm formally, we first have to introduce a concept of an index sampling i.e. set-valued random variable. We closely follow the notation and theory of sampling of \cite{Qu2014}. Furthermore, we propose our first two crucial assumptions. The Assumption \ref{ass:scaled} might seem unnatural but we note that most of the problems can be recast such that it holds.
	
	\begin{definition}[ordered $\tau$-independent sampling]
		An ordered $\tau$-independent sampling $\hat{S}$ is an ordered random set-valued mapping with elements from $[n]$ and with cardinality $\tau$. In addition, the possible outcomes of this random variable are all equally probable.
		
	\end{definition}
	
	\begin{assumption}[Hessian]\label{ass:scaled} The function $F(x) : \mathbb{R}^d \rightarrow \mathbb{R}$ is twice continuously differentiable, and has scaled Hessian s.t. $\nabla^2 F(x) \preceq \mathbf{I}$ $\forall x \in \mathbb{R}^d$. 
	\end{assumption}

	\begin{assumption}[Hessian Samples]\label{ass:samp} We assume existence of $\{\mathbf{X}_i\}_{i=1}^n$, with each $\mathbf{X}_i \in \mathbb{R}^{d\times d}$, at each point $x$ where each $\mathbb{E}[\mathbf{X}_i] = \nabla^2 F(x) $ for a given $x$. To signify the $x$ dependence, we sometimes use $\bX_j(x)$.
	\end{assumption}
	
	The Algorithm is presented in \ref{alg:Method1}. The step-size is fixed before. The usual parameter used is $c = 1$. Additionally, we present two versions of the algorithm, \emph{practical} and \emph{theoretical}. If Hessians samples are independent of $x^k$ the \emph{practical} version of the algorithm can be used, where previous estimates of Hessian can be reused. In the other, more general case, a full estimator has to be rebuilt from the start, which can be grossly inefficient. In such cases, the series must be truncated for practical purposes. 
	
	\begin{algorithm}
		\caption{Iterative Stochastic Second-Order Algorithm (ISSA) }
		\label{alg:Method1}
		\renewcommand{\algorithmicrequire}{\textbf{Parameters:}}
		\renewcommand{\algorithmicensure}{\textbf{Initialization:}}
		\begin{algorithmic}[1]
			\Require Sampling	 $\hat{S}$, oracle to return $\bX_j(x)$ $\forall j$ and $x$, $c \in \mathbb{R} $ according to the Theorems \ref{thm:theorem1} and \ref{thm:theorem2}, or adaptive. 
			\Ensure Pick $x_0 \in \mathbb{R}^d$, $\mathbf{R}^0 = \mathbf{I}$, $S^0 = \{\}$
			\For{ $k = 1,2, \dots $ }
			\State Sample $\hat{S}$ to get $S^k = S^{k-1} \cup \hat{S}$
			\If{Hessian is constant}
			\State $\mathbf{R}^k = \mathbf{R}^{k-1}$
			\For{ $ j \in \hat{S}$ }
			\State $ \mathbf{R}^k = \mathbf{I} + (\mathbf{I}-\bX_j )\mathbf{R}^{k}$ 	
			\EndFor
			\Else 
			\State $\mathbf{R}^{k} = \mathbf{I}$
			\For{ $ j \in S^k$ }
			\State $ \mathbf{R}^k = \mathbf{I} + (\mathbf{I}-\bX_j(x^k))\mathbf{R}^{k}$ 	
			\EndFor	
			\EndIf
			\State $x^{k+1} \leftarrow x^k - \frac{1}{c}\mathbf{R}^k\nabla f(x^k)$
			\EndFor
			
		\end{algorithmic}
	\end{algorithm}
	
	A concrete example of the empirical risk minimization problem which fulfills Assumptions \ref{ass:scaled} and \ref{ass:samp} is ridge regression fitting with a cost function $\frac{1}{2n} \sum_{i=1}^{n} (z_i^\top x - y_i)^2 + \frac{\lambda}{2}\norm{x}_2$. The Hessian can be easily calculated $\nabla^2 F(x) = \left(\frac{1}{n}\sum_{i=1}^{n} z_iz_i^\top + \lambda \mathbf{I}\right)$, and the Hessian samples enumerated with $i \in [n]$ can be defined as $\mathbf{X}_i := z_iz_i^\top + \mathbf{I}\lambda. $ This problem is a prime example, where practical version of the algorithm can be used as $\bX_i$ are independent of $x^k$.
	\section{Convergence Analysis}
	
	\subsection{Assumptions}
	\begin{assumption}[Smoothness, Strong Convexity]\label{ass:smooth}
		There exists a positive constant $\alpha, \beta \leq 1$ such that $\forall x,h \in \mathbb{R}^d$,
		\begin{equation}\label{eq:strgcnvx}
			F(x)+\braket{\nabla F(x),h}+\frac{\alpha}{2}\norm{h}^2_2   \leq f(x+h)\leq F(x)+\braket{\nabla F(x),h}+\frac{\beta}{2}\norm{h}^2_2.
		\end{equation}
		Minimizing the above on both sides in $h$ gives:
		\begin{equation}\label{eq:strgcnvx2}
			F(x)-f(x^*)\leq\frac{1}{2\alpha}\braket{\nabla F(x),\nabla F(x)}
		\end{equation}
		where $x^*$ denotes the minimum point of $f$.
	\end{assumption}
	%
	
	\subsection{Basic Properties}
	One of the contributions of our work is introduction of a new notation to prove the convergence results presented in Definition \ref{def:2}.
	\begin{definition}[Update]\label{def:2}
		Let $\mathbf{X}_i$ be a matrix valued random variable such that $\mathbb{E}[\mathbf{X}_i] = \nabla^2 f $ and $S$ be an ordered set such that $|S|=k\tau$ with element from $[n]$, then define
		\[
		\label{eq:Rk}
		\mathbf{R}^k :=  (\mathbf{I} \text{ } \mathbf{0}) \left( \prod_{j \in S}  \begin{pmatrix}
		(\mathbf{I}-\mathbf{X}_j) & \mathbf{I} \\
		\mathbf{0} & \mathbf{I} 
		\end{pmatrix} \right) (\mathbf{I} \text{ } \mathbf{I} ) ^\top.
		\]
	\end{definition}
	\begin{lemma}
		Let us have the same assumptions as in Definition \ref{def:2}, 
		\begin{equation} \label{eq:234}
			\mathbb{E}[\mathbf{R}^k] = \sum_{j=1}^{k\tau} (\bI - \nabla^2 F)^j \leq \sum_{j=1}^{\infty} (\bI - \nabla^2 F)^j = (\nabla^2 F)^{-1}
		\end{equation}
		and 
		\begin{equation} \label{eq:approx}
		\norm{ \mathbb{E}[\mathbf{R}^k] - (\nabla^2 F)^{-1} }_2^2 \leq \frac{(1-\alpha)^{k\tau}}{\alpha}.
		\end{equation}
	\end{lemma}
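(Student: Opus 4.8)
The plan is to put $\mathbf{R}^k$ in closed form by exploiting the block upper-triangular structure of the factors in Definition~\ref{def:2}, recognise its expectation as a truncation of the von Neumann series, and compare with the full series from the Remark. Concretely, writing $M_j=\begin{pmatrix}\mathbf{I}-\mathbf{X}_j & \mathbf{I}\\ \mathbf{0} & \mathbf{I}\end{pmatrix}$ and listing the $k\tau$ sampled matrices in sampling order as $\mathbf{X}_1,\dots,\mathbf{X}_{k\tau}$, an easy induction on the number $m$ of factors gives $\prod_{j=1}^{m}M_j=\begin{pmatrix}\prod_{j=1}^{m}(\mathbf{I}-\mathbf{X}_j) & \sum_{l=0}^{m-1}\prod_{j=1}^{l}(\mathbf{I}-\mathbf{X}_j)\\ \mathbf{0} & \mathbf{I}\end{pmatrix}$, the inductive step being a single $2\times2$ block multiplication. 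Left and right multiplication by $(\mathbf{I}\ \mathbf{0})$ and $(\mathbf{I}\ \mathbf{I})^{\top}$ picks out the sum of the two blocks in the first block-row, so with $m=k\tau$ one obtains the telescoped identity $\mathbf{R}^k=\sum_{l=0}^{k\tau}\prod_{j=1}^{l}(\mathbf{I}-\mathbf{X}_j)$, a random truncated von Neumann sum. (This is precisely the Horner-type inner loop of Algorithm~\ref{alg:Method1}, which serves as a sanity check.)

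Next I would take the expectation. Because $\hat S$ is an ordered $\tau$-independent sampling and $S$ accumulates $k$ independent such draws, the indices in $S$ — hence the matrices $\mathbf{X}_1,\dots,\mathbf{X}_{k\tau}$ appearing in $\mathbf{R}^k$ — are mutually independent, and by Assumption~\ref{ass:samp} each has $\mathbb{E}[\mathbf{X}_j]=\nabla^2 F$ at the point under consideration (in the $x$-dependent case all that follows is done conditionally on $x^k$, with $\nabla^2 F(x^k)$ in place of $\nabla^2 F$). Conditioning on $\mathbf{X}_1,\dots,\mathbf{X}_{l-1}$ and peeling off the last factor, induction on $l$ gives $\mathbb{E}\big[\prod_{j=1}^{l}(\mathbf{I}-\mathbf{X}_j)\big]=(\mathbf{I}-\nabla^2 F)^{l}$, and summing over $l$ yields $\mathbb{E}[\mathbf{R}^k]=\sum_{l=0}^{k\tau}(\mathbf{I}-\nabla^2 F)^{l}$, the first assertion of \eqref{eq:234}.

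For the comparison and the rate, Assumption~\ref{ass:smooth} gives $\alpha\mathbf{I}\preceq\nabla^2 F$ and Assumption~\ref{ass:scaled} gives $\nabla^2 F\preceq\mathbf{I}$, so $\mathbf{0}\preceq\mathbf{I}-\nabla^2 F\preceq(1-\alpha)\mathbf{I}$ and $\norm{\mathbf{I}-\nabla^2 F}_2\le 1-\alpha<1$. In particular $\nabla^2 F\succeq0$ with $\norm{\nabla^2 F}_2\le1$, so the Remark applies and $\sum_{l=0}^{\infty}(\mathbf{I}-\nabla^2 F)^{l}=(\nabla^2 F)^{-1}$; since every additional term $(\mathbf{I}-\nabla^2 F)^{l}$ is positive semidefinite, the partial sums are non-decreasing in the Loewner order and bounded above by this limit, which is the inequality in \eqref{eq:234}. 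Finally the approximation error equals the geometric tail $(\nabla^2 F)^{-1}-\mathbb{E}[\mathbf{R}^k]=\sum_{l>k\tau}(\mathbf{I}-\nabla^2 F)^{l}=(\mathbf{I}-\nabla^2 F)^{k\tau+1}(\nabla^2 F)^{-1}$, a product of commuting symmetric positive semidefinite matrices; diagonalising $\nabla^2 F$ with eigenvalues $\mu_i\in[\alpha,1]$, its spectral norm is $\max_i (1-\mu_i)^{k\tau+1}/\mu_i\le(1-\alpha)^{k\tau+1}/\alpha\le(1-\alpha)^{k\tau}/\alpha$ (equivalently, bound the tail termwise by $\sum_{l>k\tau}(1-\alpha)^{l}$), which gives \eqref{eq:approx}.

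The calculations are mostly bookkeeping, so the step that really requires care is the factorisation of the expectation: one must check that the sampling model genuinely makes all $k\tau$ matrices in $\mathbf{R}^k$ independent (sampling with replacement, both within and across rounds), and, in the $x$-dependent version, that the argument is carried out conditionally on the current iterate so that the unbiasedness statement refers to $\nabla^2 F(x^k)$. It is also worth stressing that strong convexity ($\alpha>0$) is essential rather than cosmetic here: it is what guarantees convergence of the von Neumann series and supplies the contraction factor $1-\alpha$ in the rate.
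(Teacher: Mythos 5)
Your proof is correct and, for the tail estimate \eqref{eq:approx}, takes essentially the same route as the paper: both arguments bound $\norm{\mathbb{E}[\mathbf{R}^k]-(\nabla^2 F)^{-1}}_2$ by the scalar geometric tail $\sum_{l\geq k\tau}(1-\alpha)^l=(1-\alpha)^{k\tau}/\alpha$, using $\mathbf{0}\preceq\mathbf{I}-\nabla^2 F\preceq(1-\alpha)\mathbf{I}$. You do, however, go further than the paper in two useful respects. First, the paper never actually proves the identity in \eqref{eq:234}; it is simply invoked in the tail-bound computation, whereas you derive it from the block-triangular product in Definition~\ref{def:2} together with the factorisation of the expectation. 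Your derivation also exposes an off-by-one in the statement: the block product (equivalently the Horner recursion $\mathbf{R}\leftarrow\mathbf{I}+(\mathbf{I}-\mathbf{X}_j)\mathbf{R}$ initialised at $\mathbf{R}^0=\mathbf{I}$) gives $\mathbb{E}[\mathbf{R}^k]=\sum_{l=0}^{k\tau}(\mathbf{I}-\nabla^2 F)^l$, so the sums in \eqref{eq:234} should start at $l=0$, consistent with $\sum_{l=0}^{\infty}(\mathbf{I}-\mathbf{A})^l=\mathbf{A}^{-1}$ in \eqref{eq:von_neumann}; this does not affect \eqref{eq:approx}, whose bound only improves. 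Second, you correctly isolate the one genuinely delicate step, which the paper passes over in silence: the factorisation $\mathbb{E}\big[\prod_{j\leq l}(\mathbf{I}-\mathbf{X}_j)\big]=(\mathbf{I}-\nabla^2 F)^l$ requires each sample to have conditional mean $\nabla^2 F$ given the preceding ones, which holds for sampling with replacement but fails for a without-replacement draw of a $\tau$-subset as the sampling definition could be read to suggest. Your version is the more complete of the two.
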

	
	\subsection{Linear Convergence}
		\begin{theorem} \label{thm:theorem1}
		Let Assumptions  \ref{ass:scaled}, \ref{ass:samp} and \ref{ass:smooth} be satisfied, then 
		Algorithm \ref{alg:Method1} converges linearly with the rate
		\begin{equation}\label{eq:mu}
		\mu  := \left( \frac{ (1-\alpha)^4 \alpha}{\beta ( 2-\alpha   )^2 ( (2-\alpha) + \alpha^2(1-\alpha)^{k\tau}} \right),
		\end{equation} 
		with $c := \frac{\beta (2-\alpha)^2 + \beta(2-\alpha)(1-\alpha)^{k\tau}}{ (1-\alpha)^2 - (1-\alpha)^{2k\tau}}$. \\Alternatively, 
		\begin{equation}
		\mathbb{E}[F(x^{k+1})-F(x^k)]  \leq  - \mu (F(x^k) - F(x^*)).
		\end{equation}
		\end{theorem}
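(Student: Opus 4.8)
\emph{Proof idea.} The statement is a stochastic descent estimate, and I would prove it the way one analyses SGD, but with the random preconditioner $\bR^k$ standing in front of the gradient. Fix the iterate $x^k$, write $g := \nabla F(x^k)$, and recall that the update is $x^{k+1}=x^k-\tfrac1c\,\bR^k g$. Substituting $h=-\tfrac1c\bR^k g$ into the smoothness (upper) half of \eqref{eq:strgcnvx} gives, deterministically in the realisation of $\bR^k$,
\[
F(x^{k+1})\;\le\;F(x^k)-\frac1c\,\braket{g,\bR^k g}+\frac{\beta}{2c^2}\,\norm{\bR^k g}_2^2 .
\]

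Now take the expectation over the sampling that builds $\bR^k$, conditional on $x^k$ (treating $\bR^k$ as assembled from samples independent of $x^k$; for the accumulating \emph{practical} variant this costs only an extra tower-property bookkeeping, which I would postpone). The right-hand side is governed by two scalars: a lower bound $\braket{g,\mathbb{E}[\bR^k]g}\ge a_{k\tau}\norm{g}_2^2$ and an upper bound $\mathbb{E}\,\norm{\bR^k g}_2^2=g^{\!\top}\mathbb{E}[(\bR^k)^{\!\top}\bR^k]\,g\le b_{k\tau}\norm{g}_2^2$. The first is read off the Lemma: $\mathbb{E}[\bR^k]$ is the truncated von Neumann series, which (by Assumptions \ref{ass:scaled} and \ref{ass:smooth}, which confine the eigenvalues of $\bI-\nabla^2 F$ to $[0,1-\alpha]$) has spectrum pinned between $\bI$ and $(\nabla^2 F)^{-1}$, with \eqref{eq:approx} measuring its distance to $(\nabla^2 F)^{-1}$; these give the explicit $a_{k\tau}$ in terms of $\alpha$ and $(1-\alpha)^{k\tau}$. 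With both bounds, the right-hand side is a scalar quadratic in $1/c$; maximising the coefficient of $-\norm{g}_2^2$ over $c$ — the optimum is $c=\beta b_{k\tau}/a_{k\tau}$, which is the origin of the prescribed step size — gives $\mathbb{E}[F(x^{k+1})\mid x^k]-F(x^k)\le-\tfrac{a_{k\tau}^2}{2\beta b_{k\tau}}\norm{g}_2^2$. Finally the strong-convexity bound \eqref{eq:strgcnvx2}, rearranged as $\norm{\nabla F(x^k)}_2^2\ge 2\alpha\,(F(x^k)-F(x^*))$, converts this into $-\mu\,(F(x^k)-F(x^*))$ with $\mu=\alpha a_{k\tau}^2/(\beta b_{k\tau})$, which is the stated inequality once the closed forms of $a_{k\tau},b_{k\tau}$ are substituted into \eqref{eq:mu}.

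The one genuinely substantial step is the second-moment bound $\mathbb{E}[(\bR^k)^{\!\top}\bR^k]\preceq b_{k\tau}\bI$, and this is where I expect the effort to go. I would run a one-step recursion along the inner loop: set $u_0=g$, $u_j=g+A_j u_{j-1}$ with $A_j=\bI-\bX_{s_j}$, expand $\norm{u_j}_2^2=\norm g_2^2+2\braket{g,A_j u_{j-1}}+\norm{A_j u_{j-1}}_2^2$, take the expectation over $A_j$ using $\mathbb{E}[A_j]=\bI-\nabla^2F\preceq(1-\alpha)\bI$ and the per-sample bound $\mathbb{E}[A_j^{\!\top}A_j]\preceq(1-\alpha)\bI$ (which holds once the individual samples obey $0\preceq\bX_{s_j}\preceq\bI$, as in the ridge example after normalisation), and split the cross term $2\braket{g,\mathbb{E}[A_j]u_{j-1}}$ by Young/AM--GM with the free parameter chosen so the coefficient of $\norm{u_{j-1}}_2^2$ stays strictly below $1$. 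Then $\mathbb{E}\norm{u_j}_2^2$ satisfies a contractive affine recursion; summing the geometric series over the $k\tau$ inner steps yields $b_{k\tau}$ in closed form (iterating the PSD inequality through the nested products gives the operator version). The remaining obstacles are all bookkeeping: tuning the Young parameter so the constants land exactly as in \eqref{eq:mu}, controlling the cross-terms between sub-products of different lengths inside $(\bR^k)^{\!\top}\bR^k$, and — for the accumulating variant — carrying the conditional estimate through the dependence of $\bR^k$ on the past.
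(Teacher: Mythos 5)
Your proposal follows essentially the same route as the paper: expand via the smoothness upper bound, take expectation to reduce everything to a lower bound on $\braket{g,\mathbb{E}[\bR^k]g}$ and an upper bound on $\lambda_{max}(\mathbb{E}[(\bR^k)^\top\bR^k])$ (these are exactly the paper's two appendix lemmas), tune $c$, and finish with \eqref{eq:strgcnvx2} --- and your second-moment recursion $u_j = g + A_j u_{j-1}$ is the paper's operator recursion $(\bR_{k+1})^\top\bR_{k+1} = \bI + \bR_k^\top A^\top + A\bR_k + \bR_k^\top A^\top A\bR_k$ evaluated on the fixed vector $g$. The only cosmetic differences are that the paper bounds the cross term directly by $2(1-\alpha)/\alpha$ using the first-moment bound rather than a Young-type split, and it posits $c$ rather than deriving it as the exact optimizer; your explicit flagging of the per-sample requirement $\mathbf{0}\preceq\bX_j\preceq\bI$ (used silently by the paper in the step $(\bI-\bX_j)^2\preceq\bI-\bX_j$) is a point in your favor.
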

		
	Note than in contrast to the results of \cite{Agarwal2016}, we prove linear convergence without the need to converge to sufficient vicinity of the optimum. However, the result is much worse than a standard gradient descend with rate $\frac{\alpha}{\beta}$ by at least the factor of $8$. 
	\subsection{Quadratic Convergence}
	\begin{theorem}\label{thm:theorem2}
		Let Assumptions  \ref{ass:scaled}, \ref{ass:samp} and \ref{ass:smooth} be satisfied, and additionally let
		\begin{equation}\label{eq:cond}
			\frac{\alpha^2}{8\beta} \geq \norm{\nabla F(x^k)} \geq  \frac{\beta (1-\alpha)^{k\tau}}{2} + \frac{\alpha \beta (\beta-\alpha)}{4},
		\end{equation}
		then the algorithm improves the number of significant digits quadratically when run with $c = 1$.
	\end{theorem}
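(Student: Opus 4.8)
\emph{Proof plan.} Abbreviate $g^k:=\nabla F(x^k)$ and $H^k:=\nabla^2 F(x^k)$; with $c=1$ the update is $x^{k+1}=x^k-\mathbf{R}^k g^k$. I read the conclusion as the statement that, while \eqref{eq:cond} holds, $\mathbb{E}[\norm{\nabla F(x^{k+1})}\mid x^k]\le C(\alpha,\beta)\norm{\nabla F(x^k)}^2$ with $C(\alpha,\beta)$ small enough --- thanks to the upper bound in \eqref{eq:cond} --- that $\norm{g^k}$ actually contracts, so that iterating the squared recursion doubles the number of correct digits per step. The first move is to pass from iterates to gradients via the integral form of Taylor's theorem: with $\bar H^k:=\int_0^1\nabla^2 F(x^k-t\,\mathbf{R}^k g^k)\,dt$,
\begin{equation}
g^{k+1}=g^k-\bar H^k\mathbf{R}^k g^k=\left(\mathbf{I}-\bar H^k\mathbf{R}^k\right)g^k,
\end{equation}
so everything reduces to estimating how far the random operator $\bar H^k\mathbf{R}^k$ is from $\mathbf{I}$ in the direction $g^k$.

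Next I would split the error operator in three,
\begin{equation}
\mathbf{I}-\bar H^k\mathbf{R}^k=\underbrace{\left(\mathbf{I}-H^k\,\mathbb{E}[\mathbf{R}^k\mid x^k]\right)}_{(\mathrm A)}+\underbrace{H^k\!\left(\mathbb{E}[\mathbf{R}^k\mid x^k]-\mathbf{R}^k\right)}_{(\mathrm B)}+\underbrace{\left(H^k-\bar H^k\right)\mathbf{R}^k}_{(\mathrm C)},
\end{equation}
and take $\mathbb{E}[\,\cdot\mid x^k]$, which annihilates $(\mathrm B)$ since $g^k,H^k$ are $x^k$-measurable. For $(\mathrm A)$, the Lemma --- equivalently, telescoping the product of Definition~\ref{def:2} with $|S^k|=k\tau$ --- gives $H^k\,\mathbb{E}[\mathbf{R}^k\mid x^k]=\mathbf{I}-(\mathbf{I}-H^k)^{k\tau+1}$, and since by Assumptions~\ref{ass:scaled} and~\ref{ass:smooth} the spectrum of $\mathbf{I}-H^k$ lies in $[0,1-\alpha]$, $\norm{(\mathbf{I}-H^k)^{k\tau+1}}_2\le(1-\alpha)^{k\tau}$. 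The lower bound in \eqref{eq:cond} then yields $(1-\alpha)^{k\tau}\le\tfrac{2}{\beta}\norm{g^k}$, so $(\mathrm A)$ contributes at most $\tfrac{2}{\beta}\norm{g^k}^2$ --- quadratic in $\norm{g^k}$.

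For $(\mathrm C)$ I would use the crude but only available estimate $\norm{H^k-\bar H^k}_2\le\beta-\alpha$ (both Hessians, hence any average of them, have spectrum in $[\alpha,\beta]$, so their difference has operator norm $\le\beta-\alpha$), together with a bound $\norm{\mathbf{R}^k}_2\le\tfrac1\alpha$ (its mean satisfies $\mathbb{E}[\mathbf{R}^k\mid x^k]=\sum_{j=0}^{k\tau}(\mathbf{I}-H^k)^j\preceq\tfrac1\alpha\mathbf{I}$); this makes $(\mathrm C)$ contribute $\le\tfrac{\beta-\alpha}{\alpha}\norm{g^k}$, and the second part of the lower bound in \eqref{eq:cond}, $\norm{g^k}\ge\tfrac{\alpha\beta(\beta-\alpha)}{4}$, turns it into $\le\tfrac{4}{\alpha^2\beta}\norm{g^k}^2$ --- again quadratic. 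Summing the three contributions gives $\mathbb{E}[\norm{g^{k+1}}\mid x^k]\le C(\alpha,\beta)\norm{g^k}^2$, and feeding in the upper bound $\norm{g^k}\le\tfrac{\alpha^2}{8\beta}$ is supposed to push the right-hand side below $\norm{g^k}$, so that \eqref{eq:cond} is self-perpetuating and the square relation iterates. A clean way to organise this is to prove it first in the constant-Hessian (``practical'') regime --- the ridge-regression example --- where $\bar H^k=H^k$ exactly, $(\mathrm C)$ vanishes, and only $(\mathrm A)$ remains.

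The step I expect to be the main obstacle is the operator-norm control of $\mathbf{R}^k$ used in $(\mathrm C)$: $\mathbf{R}^k$ is a product of non-commuting random matrices $\mathbf{I}-\mathbf{X}_j$ whose individual norms need not be $\le1$ (e.g.\ $\norm{\mathbf{X}_i}_2=\norm{z_i}_2^2+\lambda$ in the ridge example may exceed $1$), so the bound $\mathbb{E}[\mathbf{R}^k\mid x^k]\preceq\tfrac1\alpha\mathbf{I}$ on the \emph{mean} does not by itself control $\norm{\mathbf{R}^k}_2$ or $\mathbb{E}\norm{\mathbf{R}^k}_2$; relatedly, $\bar H^k$ is correlated with $\mathbf{R}^k$ through $x^{k+1}$, which is why $H^k$ has to be peeled off before taking expectations. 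I would resolve this by restricting to the constant-Hessian regime (where $(\mathrm C)$ disappears), by adding the natural assumption $\mathbf{0}\preceq\mathbf{X}_j\preceq\mathbf{I}$ (each factor a contraction, giving deterministic control of $\norm{\mathbf{R}^k}_2$), or by a matrix-concentration bound on the product. A secondary, purely computational, obstacle is sharpening the three estimates above enough that the constant $C(\alpha,\beta)$ actually matches the threshold $\tfrac{\alpha^2}{8\beta}$ of \eqref{eq:cond}: the crude bounds sketched here give $C$ of order $\alpha^{-2}\beta^{-1}$, which is a little too large as stated and needs a tighter treatment of $(\mathrm A)$ and $(\mathrm C)$.
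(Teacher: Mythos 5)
Your plan follows the same Newton-type route as the paper's proof: both start from the gradient recursion $g^{k+1}\approx(\mathbf{I}-\text{(averaged Hessian)}\,\mathbf{R}^k)\,g^k$ with $g^k:=\nabla F(x^k)$, split the error operator into a bias part (controlled by the von Neumann tail $(1-\alpha)^{k\tau}/\alpha$ from \eqref{eq:approx}), a fluctuation part $\mathbf{R}^k-\mathbb{E}[\mathbf{R}^k]$, and a Hessian-variation/Taylor part, and then use the two-sided condition \eqref{eq:cond} to make every contribution quadratic in $\norm{g^k}$ and contracting. Your term $(\mathrm A)$ is the paper's $\norm{(\nabla^2 f)^{-1}-\mathbb{E}[\mathbf{R}^k]}$, your $(\mathrm B)$ is its $\norm{\mathbb{E}[\mathbf{R}^k]-\mathbf{R}^k}$, and your $(\mathrm C)$ plays the role of its Taylor remainder $2\beta\norm{\Delta x^k}^2$ (your linear-times-lower-bound treatment of $(\mathrm C)$ is arguably more defensible than the paper's, since a genuinely quadratic remainder requires a Hessian Lipschitz constant that is not among Assumptions \ref{ass:scaled}--\ref{ass:smooth}). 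You also correctly flag the two soft spots that the paper itself glosses over: the pathwise bound $\norm{\mathbf{R}^k}_2\le 1/\alpha$, used silently as $\norm{\Delta x^k}\le\norm{g^k}/\alpha$, and the correlation of the averaged Hessian with $\mathbf{R}^k$.

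The genuine gap is your handling of $(\mathrm B)$. The conclusion, as the paper proves it, is a bound on $\mathbb{E}[\norm{\nabla f(x^{k+1})}]$, and by Jensen $\mathbb{E}\norm{\cdot}\ge\norm{\mathbb{E}[\cdot]}$, so taking $\mathbb{E}[\,\cdot\mid x^k]$ to annihilate $(\mathrm B)$ only bounds the weaker quantity $\norm{\mathbb{E}[g^{k+1}\mid x^k]}$; the fluctuation term does not disappear from the quantity you actually need. Moreover, the second summand $\frac{\alpha\beta(\beta-\alpha)}{4}$ in the lower bound of \eqref{eq:cond} exists precisely to absorb this term: the paper bounds $\mathbb{E}\norm{\mathbf{R}^k-\mathbb{E}[\mathbf{R}^k]}\le\frac{\beta-\alpha}{2}$ (itself asserted without proof --- this is the step where LISSA invokes matrix concentration) and then uses $\norm{g^k}\ge\frac{\alpha\beta(\beta-\alpha)}{4}$ to make it quadratic. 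If you drop $(\mathrm B)$ you cannot reconstruct the stated condition, and if you keep it you need the concentration estimate you have not supplied. Finally, as you concede, your constants do not close against the threshold $\alpha^2/(8\beta)$, which is the other half of making ``quadratic improvement of significant digits'' literal (the paper needs $\tfrac{4\beta}{\alpha^2}\norm{g^{k+1}}\le(\tfrac{4\beta}{\alpha^2}\norm{g^k})^2\le\tfrac14$); so as written the plan does not yet yield the theorem.
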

	
	We would like to remark that the last condition for Theorem \ref{thm:theorem2} is rather artificial, and is hardly satisfied for the whole duration of the optimization; if at all. However, empirical results from \cite{Agarwal2016}, as well as ours, show a 	 very steep initial convergence to the optimum, which might hint that in such circumstances the conditions of the theorem is satisfied and is the consequence of the initial rapid decrease. For example, consider, $\tau$ large , $\beta = 1$ and $\alpha \geq 1/3$, then we see that a suitably chosen starting point could lead to gradient fulfilling the condition allowing for very fast convergence.
	
	\section{Further Considerations}
		\subsection{Batch gradient evaluation}
			The ISSA algorithm requires the evaluation of full gradient at each step as presented in Algorithm \ref{alg:Method1}. This is significantly hinders the applicability of this algorithm as the evaluation of the gradient can be as expensive as the matrix inversion in practical implementation, or it might be too expensive to perform at all. In such circumstances we recommend using an \emph{ online} version of the algorithm, where two samplers are run in parallel, one sampling the gradient mini-batch and one sampling the information for the inverse Hessian estimator $\bR^k$. 
			
			Should the two sampling procedures be uncorrelated, and the gradient estimate unbiased such as $\sum_{j \in S_2}\nabla f_j(x_k) $, where elements in $S_2 \subset [n]$ with all elements having an equal probability of being sampled, we are able to prove again in expectation (on these sampling processes) convergence. However, the statement of the variance remains intractable. We perform a numerical experiments to explore the impact of batch implementation in the case of quadratic function. 
			
			We perform several runs of ISSA algorithm, with the full gradient, and online batch gradient with a separate independent sampling $\hat{S}_2$. Numerical experiments reveal that when the sampling mechanism is shared with the $\bR^k$, the algorithm is unstable. Also, we report that the algorithm tends to be unstable when $|S^k|>|\hat{S}_2|$. Therefore, we suggest an adaptive scheme where $|\hat{S}_2|$ grows to improve the gradient estimate. With increasing $|\hat{S}_2|$ size the optimization stabilizes faster, and the instability occurs in later stages of optimization the optimization. For details please explore Figure \ref{fig:2}. 
			
			\begin{figure}
				\centering
				
					\begin{subfigure}[t]{0.49\textwidth}
						\includegraphics[width=\textwidth]{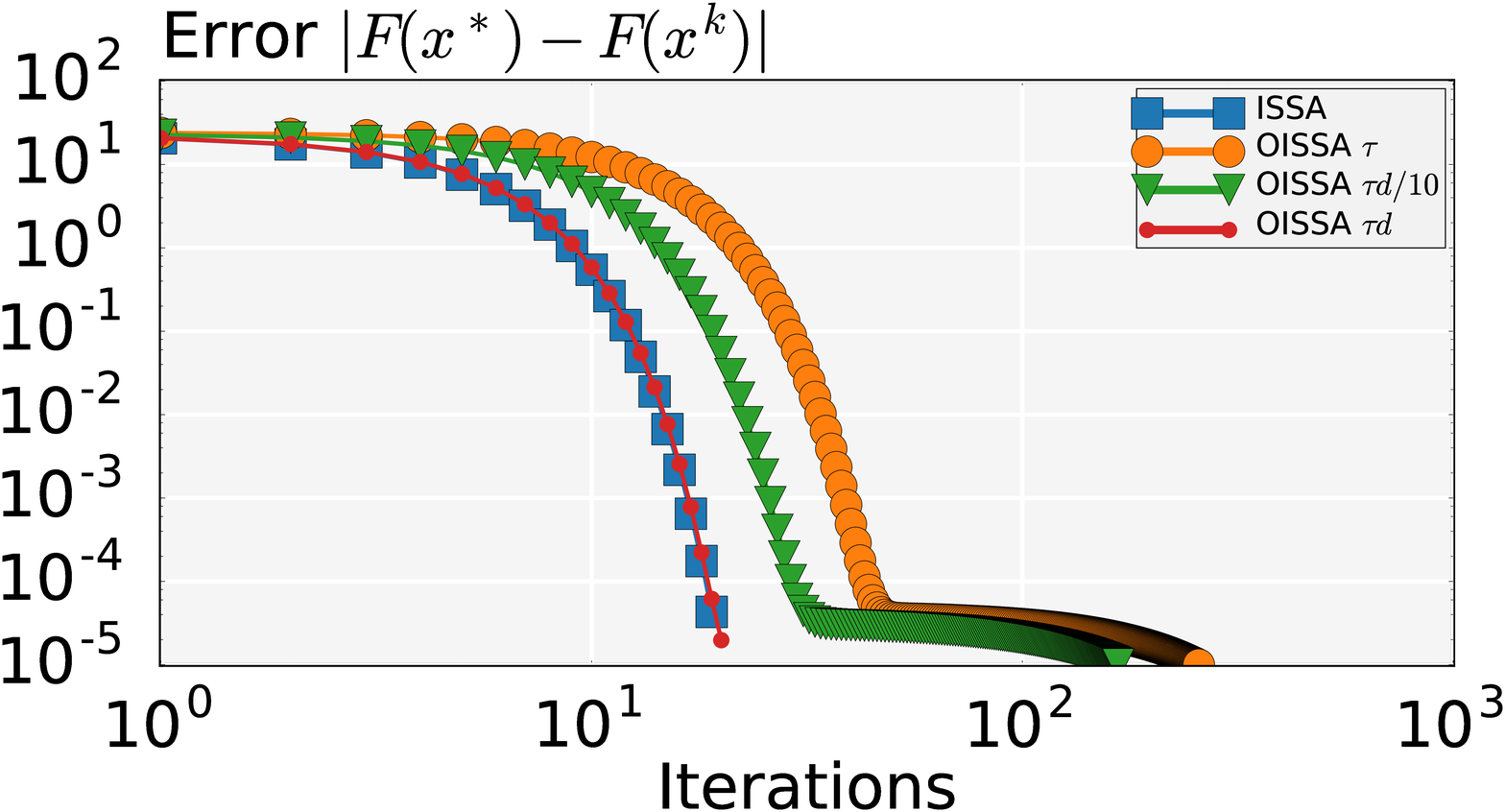}
						\caption{A linear regression model with an artificial dataset such that $n = 10^6$ and $d = 10^2$ with batch gradient estimator. Different batch-size for gradient presented in the legend}
						\label{fig:gull2}
					\end{subfigure}
					~
					\begin{subfigure}[t]{0.49\textwidth}
						\includegraphics[width=\textwidth]{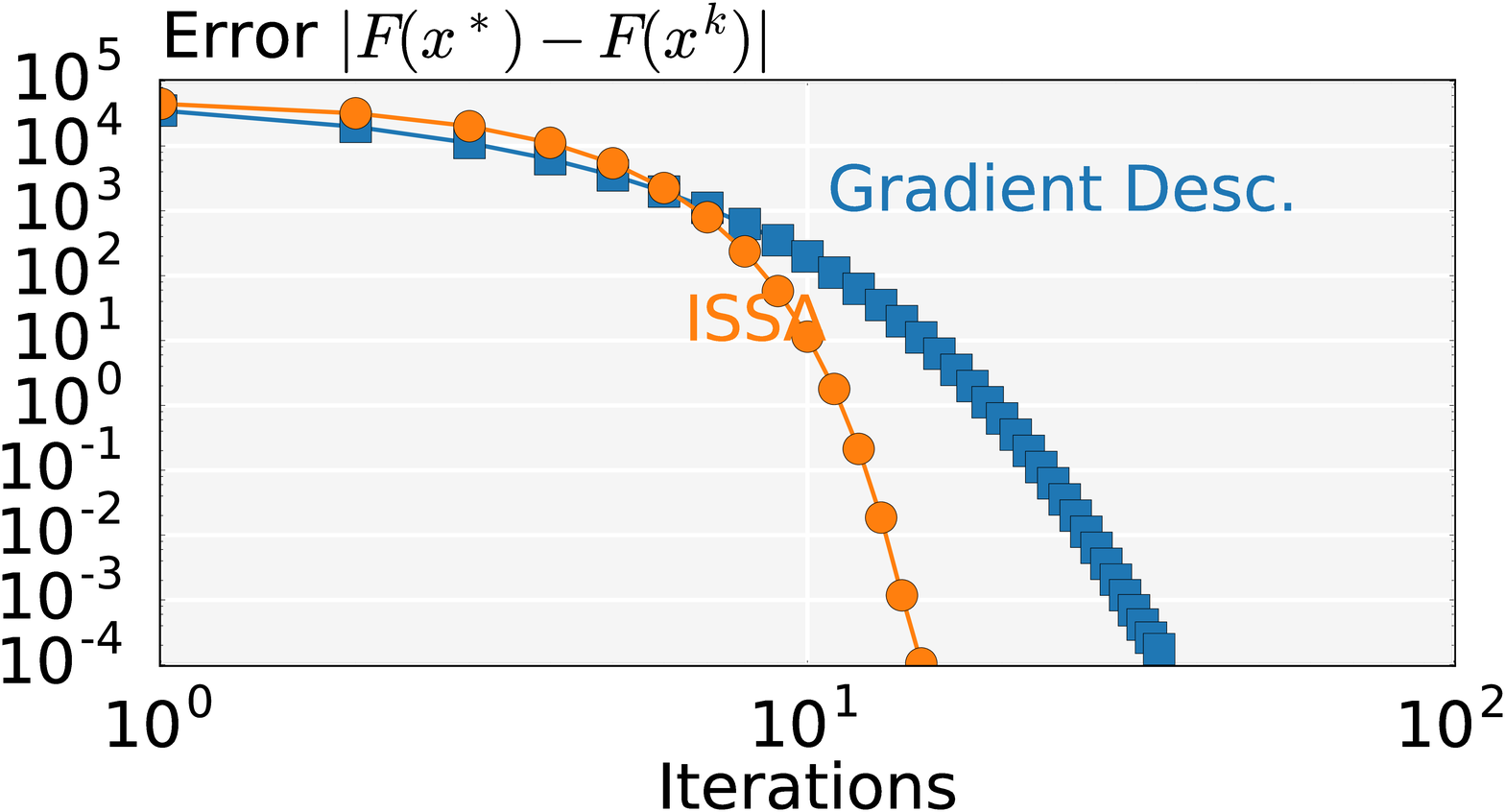}
						\caption{A linear regression model with an artificial dataset such that $n = 10^6$ and $d = 10^2$. The advantage of scalling become appartent very soon.}
						\label{fig:gull3}
					\end{subfigure}
					
				\label{fig:2}
			\end{figure}

		\subsection{Adaptive Settings and Diminishing History Property}
			In online convex optimization, where data points arrive in streams, one can consider a scenario where the probabilistic model with which the data points are generated changes over time. In such circumstances, the optimal point moves $x^{*,t}$ and is differs the optimization. The desired effect of algorithm is to tracks the trajectory of the optimum in time.
			
			Standard theory of stochastic gradient descent requires that the step-sizes of the algorithm be decreasing such that $\sum_{j}^{\infty} \gamma_t = \infty$, but at the same time $\sum_{j}^{\infty} \gamma_t^2 \leq \infty$. Adaptive algorithms such as AdaGrad \cite{Duchi2011} incorporate information about the objective on the flight, however irrespective of the counter, all information is equally weighted. When the objective is changing, the old information looses its importance, and this is not reflected in either of the algorithms.  
			
			In contrast to this, ISSA posses a property that it adapts the geometry of the step with the information from Hessian samples, and weights the older updates less than the newer ones. This leads us to conjecture that ISSA may be suitable for online problems where the probabilistic model is changing over the course of optimization, and an adaptive behavior is desired. 
		
	\section{Numerical Results}
	The experiment was performed on Lenovo Laptop with processor Intel i7 2.6 GHz. We used the numerical linear algebra library Eigen as the main backbone for our implementation. We show one experiment fitting least squares estimator in Figure \ref{fig:1} with artificial dataset where the design matrix elements are sampled from truncated standard normal distribution, and logistic regression on standard \emph{mushroom} dataset \ref{alg:Method1}.
	
	\begin{figure}
		\centering
		
		\begin{subfigure}[b]{0.49\textwidth}
			\includegraphics[width=\textwidth]{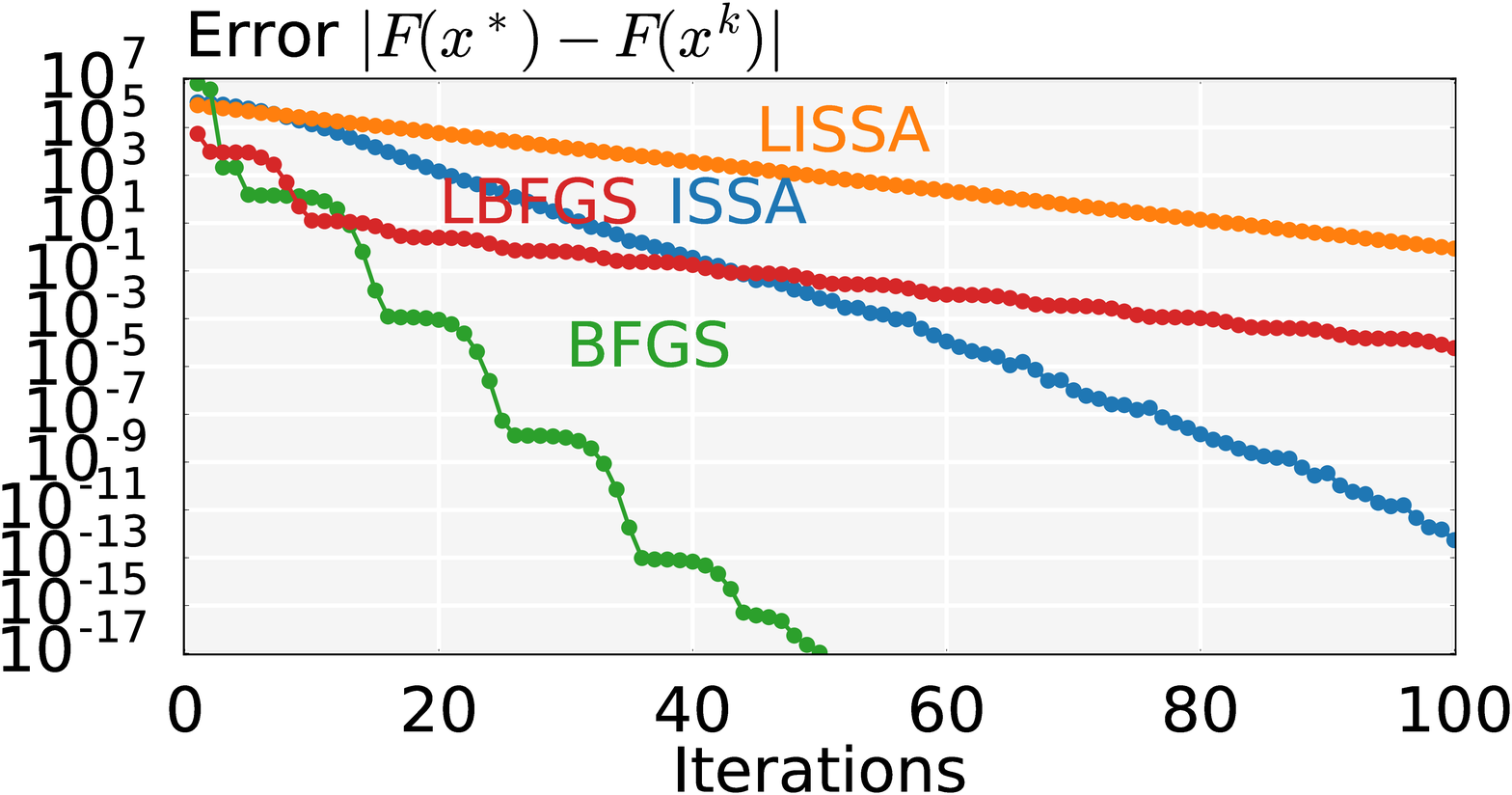}
			\caption{A linear regression model with an artificial dataset such that $n = 10^6$ and $m = 10^2$.}
			\label{fig:gull}
		\end{subfigure}
		~ 
		\begin{subfigure}[b]{0.49\textwidth}
			\includegraphics[width=\textwidth]{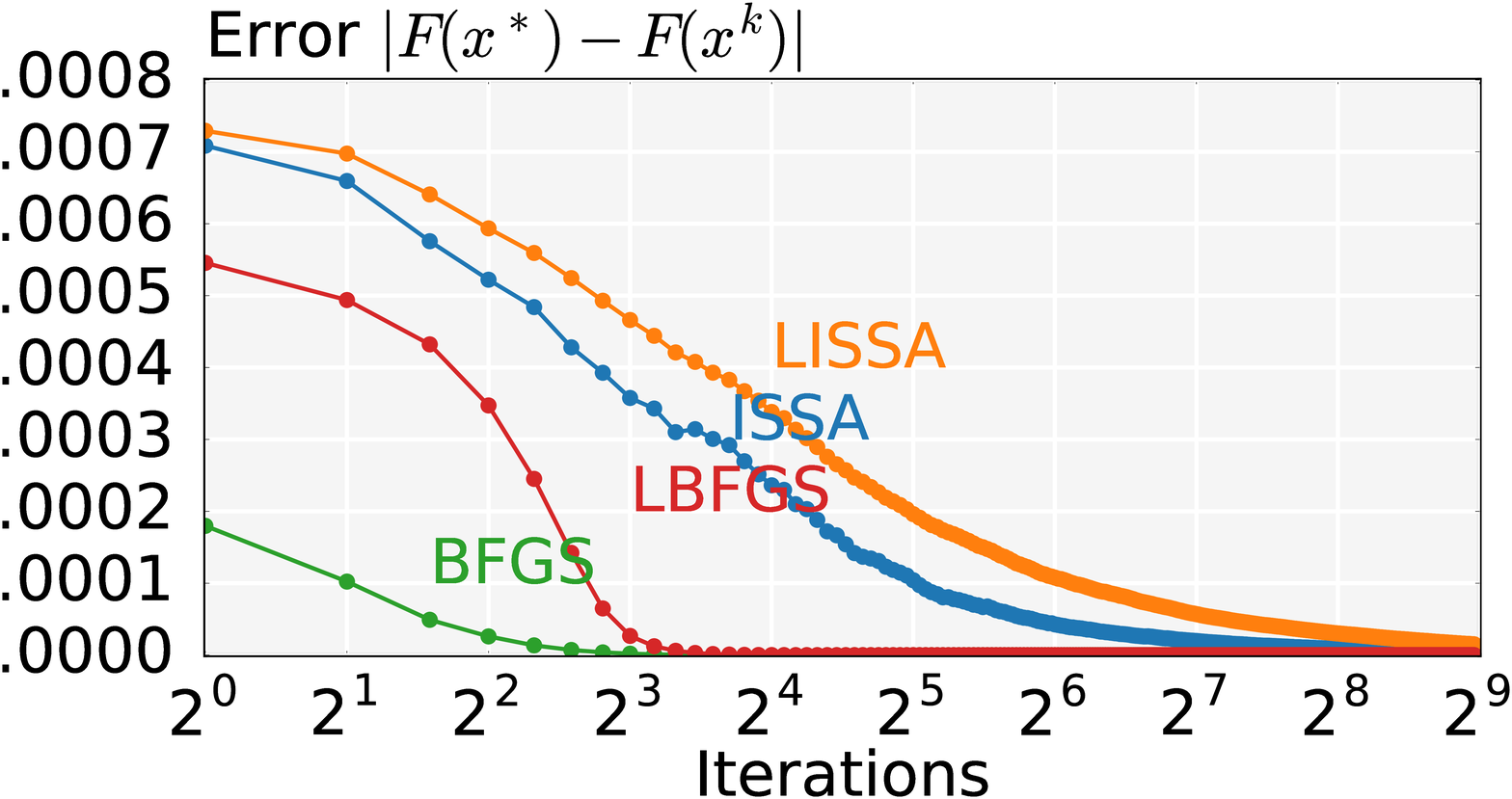}
			\caption{A logistic regression on \emph{mushroom} dataset with $n = 8124$ and $m = 22$. }
			\label{fig:tiger}
		\end{subfigure}

		\caption{A numerical experiment comparing ISSA, LISSA, BFGS and L-BFGS algorithms. In a) the $\tau = 5$ for ISSA and L-BFGS used last 5 gradient information as well. LISSA was run with $S_2 = 20, S_1 = 1$. In the second experiment on the real dataset all parameters were equal to $3$. ISSA was run without the decreasing step size. }
		\label{fig:1}
	\end{figure}

	\section{Conclusion}
	We presented an iterative stochastic second-order optimization algorithm that iteratively builds an approximation to Hessian inverse via Neumann Series. We have provided two analyses of the convergence properties of the algorithm. One of those shows linear convergence under milder conditions than previous results and one analysis which might shed light on  the very fast initial convergence of the method. The numerical experiment shows that our method matches L-BFGS performance.

	\subsubsection*{Acknowledgments}
	The author would like to thank Prof Nicolai Meinshausen of ETH Z\"urich, and Prof Martin Jaggi of EPF Lausanne for their kind advice and help with this project.
	\bibliographystyle{plain}
	\bibliography{bib.bib}

	\section*{Appendix}
	
	\begin{lemma}
			Let $\mathbf{R}_k$ be defined as in Definition \ref{def:2}. Then the following bound on the first moment holds,
			\begin{eqnarray}
				\lambda_{max} ( -\mathbb{E}(\mathbf{R}_k)) &  \leq &  - \frac{(1-\alpha)^2(1-(1-\alpha)^{2k\tau - 2})}{2\alpha - \alpha^2}
				\label{eq:22} \\
				 & \leq & - \frac{(1-\alpha)^2}{\alpha(2-\alpha)}+\mathcal{O}((1-\alpha)^{k\tau}).
			\end{eqnarray}
	\end{lemma}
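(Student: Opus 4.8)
The plan is to reduce the statement to an elementary spectral estimate for a truncated geometric matrix series. By \eqref{eq:234}, $\mathbb{E}[\bR^k]=\sum_{j=1}^{k\tau}(\bI-\nabla^2F)^j$ is a matrix polynomial in the symmetric Hessian $\nabla^2F$, hence is itself symmetric and positive semidefinite; consequently $\lambda_{max}(-\mathbb{E}[\bR^k])=-\lambda_{min}(\mathbb{E}[\bR^k])$, so it suffices to bound $\lambda_{min}(\mathbb{E}[\bR^k])$ from below by the quantity on the right-hand side of the statement.

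First I would diagonalize. All summands $(\bI-\nabla^2F)^j$ commute with $\nabla^2F$ and share its eigenvectors, so an eigenvalue $\nu$ of $\nabla^2F$ contributes the eigenvalue $g(1-\nu)$ of $\mathbb{E}[\bR^k]$, where $g(t):=\sum_{j=1}^{k\tau}t^{j}$. By Assumption~\ref{ass:scaled} together with the strong-convexity/smoothness Assumption~\ref{ass:smooth}, the spectrum of $\nabla^2F$ lies in $[\alpha,1]$, so the argument $t=1-\nu$ ranges over a subset of $[0,1-\alpha]\subset[0,1)$, on which $g$ is strictly increasing. Hence $\lambda_{min}(\mathbb{E}[\bR^k])=g(1-\nu_{\star})$, where $\nu_{\star}$ is the eigenvalue of $\nabla^2F$ making $\bI-\nabla^2F$ smallest, and the task becomes a scalar lower bound on $g$ at that endpoint.

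The remaining work is a scalar estimate of $g$ and of its truncation tail. I would use the elementary relaxation $g(t)=\sum_{j=1}^{k\tau}t^{j}\ge\sum_{j=1}^{k\tau-1}t^{2j}=\frac{t^{2}\bigl(1-t^{2k\tau-2}\bigr)}{1-t^{2}}$, valid for $t\in[0,1]$ since each $t^{j}\ge t^{2j}$ and one nonnegative term is discarded. Evaluating the right-hand side at $t=1-\alpha$ and using $1-(1-\alpha)^{2}=2\alpha-\alpha^{2}$ reproduces exactly $\frac{(1-\alpha)^{2}\bigl(1-(1-\alpha)^{2k\tau-2}\bigr)}{2\alpha-\alpha^{2}}$, which is the first claimed bound; discarding the nonnegative correction $\frac{(1-\alpha)^{2k\tau}}{2\alpha-\alpha^{2}}=\mathcal{O}\bigl((1-\alpha)^{k\tau}\bigr)$ then yields the second, asymptotic form.

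The main obstacle is the bookkeeping in the second and third steps: one must track carefully that the smallest eigenvalue of $\mathbb{E}[\bR^k]$ is governed by the direction in which $\bI-\nabla^2F$ is smallest, and verify that the relaxation $g(t)\ge\sum_j t^{2j}$ is applied consistently so that evaluating at $t=1-\alpha$ genuinely produces a lower bound on $\lambda_{min}(\mathbb{E}[\bR^k])$ and not an upper bound -- a sign or endpoint slip here collapses the estimate to the trivial $0$. A subsidiary point, already supplied by the lemma preceding \eqref{eq:234}, is the factorization of the expectation of the ordered matrix product in Definition~\ref{def:2} through independence of the sampling, which is precisely what lets $\mathbb{E}[\bR^k]$ be treated as a clean polynomial in $\nabla^2F$ in the first place.
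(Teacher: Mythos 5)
Your reduction is, underneath the packaging, the same argument the paper gives: the paper splits $-\sum_{j=1}^{k\tau}(\bI-\nabla^2 F)^j$ into odd and even powers and discards the odd part as negative semidefinite, while you discard it via the scalar pairing $t^j\ge t^{2j}$; both routes land on the even-power geometric sum $\sum_{j=1}^{k\tau-1}t^{2j}=t^2(1-t^{2k\tau-2})/(1-t^2)$ evaluated at $t=1-\alpha$, which is exactly the closed form in \eqref{eq:22}. Your diagonalization framing is cleaner than the paper's quadratic-form manipulations, but it does not buy anything different.

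The step you yourself flag as ``the main obstacle'' is not bookkeeping: it is where the argument fails, and the paper's own proof fails at the corresponding step. As you observe, $\lambda_{min}(\mathbb{E}[\bR^k])=g(1-\nu_{max})$ where $\nu_{max}=\lambda_{max}(\nabla^2F)$ and $g(t)=\sum_{j=1}^{k\tau}t^j$ is increasing on $[0,1)$. Since $\nu_{max}\ge\alpha$, we have $1-\nu_{max}\le 1-\alpha$, so $g(1-\nu_{max})\le g(1-\alpha)$: evaluating at $t=1-\alpha$ produces an \emph{upper} bound on $\lambda_{min}(\mathbb{E}[\bR^k])$, whereas the lemma needs a lower bound. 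The only lower bound the assumptions supply is $g(1-\nu_{max})\ge g(1-\beta)$, i.e.\ the stated expression with $\beta$ in place of $\alpha$; indeed, if $\nabla^2F$ has an eigenvalue equal to $1$ (permitted by Assumptions \ref{ass:scaled} and \ref{ass:smooth}), then $\lambda_{min}(\mathbb{E}[\bR^k])=0$ and \eqref{eq:22} is violated outright. The paper's proof commits the same reversal at the step $-\min_{\norm{x}=1}\braket{x,\sum_{j\text{ even}}(\bI-\nabla^2 F)^j x}\le-\sum_{j\text{ even}}(1-\alpha)^j$: strong convexity gives $(\bI-\nabla^2F)^j\preceq(1-\alpha)^j\bI$ for even $j$, which bounds that minimum from \emph{above}, not below. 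So your proposal faithfully reconstructs the paper's argument, gap included; you located the weak point correctly, but it cannot be closed as stated --- the constant is governed by the smoothness parameter $\beta$ (through $\lambda_{max}(\nabla^2F)$), not by the strong-convexity parameter $\alpha$.
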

	\begin{proof}
		\begin{eqnarray}
		\lambda_{max} ( -\mathbb{E}(\mathbf{R}_k)) & =  & \max_{\norm{x}=1} -\braket{x,\mathbb{E}(\mathbf{R}_k)x} \\
		& \stackrel{\eqref{eq:Rk}} = &  \max_{\norm{x}=1} \braket{x,-\sum_{j=1}^{k\tau}(\bI - \nabla^2 f )^j x} \\
		&  = &  \max_{\norm{x}=1} \left( \braket{x,\sum_{j \text{ odd}}^{k\tau}( \nabla^2 f - \bI )^j - \sum_{j \text{ even}}^{k\tau}(\bI - \nabla^2 f )^j x} \right) \\
		& \leq & \max_{\norm{x}=1} \braket{x,\sum_{j \text{ odd}}^{k\tau}( \nabla^2 f - \bI )^jx} - \min_{\norm{x}=1} \braket{x,\sum_{j \text{ even}}^{k\tau}(\bI - \nabla^2 f )^jx} \\ 
		& \stackrel{\eqref{eq:strgcnvx}} \leq & -\min_{\norm{x}=1}  (\braket{x,\sum_{j \text{ even}}^{k\tau}(\bI - \nabla^2 f )^j x} ) \\
		& \stackrel{\eqref{eq:strgcnvx}} \leq & - \sum_{j \text{ even}}^{k\tau}(1 - \alpha )^j \\
		&  = & - \frac{(1-\alpha)^2 (1 - (1-\alpha)^{2k\tau - 2}) }{2\alpha - \alpha^2} \\
		& \leq & - \frac{(1-\alpha)^2}{\alpha(2-\alpha)} + \frac{(1-\alpha)^{2k\tau}}{\alpha(2-\alpha)}
		\end{eqnarray}
	\end{proof}

	\begin{lemma}
		Let $\mathbf{R}_k$ be defined as in Definition \ref{def:2}. Then the following bound on the second moment holds, 
		\begin{equation}
			\lambda_{max}(\mathbb{E}[(\mathbf{R}_k)^\top \mathbf{R}_k] ) \leq \frac{2-\alpha}{\alpha^2} + (1-\alpha)^{k\tau}. \label{eq:231}
		\end{equation}
	\end{lemma}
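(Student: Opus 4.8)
The plan is to discard the block-matrix expression of Definition \ref{def:2} in favour of the one-step recursion it encodes, $\bR^{m} = \bI + (\bI - \bX_m)\bR^{m-1}$ with $\bR^{0} = \bI$, where $m$ runs from $1$ to $k\tau$ and $\bX_m$ is the $m$-th processed Hessian sample --- symmetric, with mean $\nabla^2 F =: \mathbf{H}$, and (by the sampling scheme) independent of $\bR^{m-1}$. Beyond $\alpha\bI \preceq \mathbf{H} \preceq \bI$, the one extra fact I would use is that the samples themselves satisfy $\mathbf{0} \preceq \bX_m \preceq \bI$ (this is what makes the per-sample von Neumann step in Algorithm \ref{alg:Method1} legitimate), so that $\bX_m^2 \preceq \bX_m$ and hence
\[
\mathbb{E}\big[(\bI - \bX_m)^2\big] \;=\; \bI - 2\mathbf{H} + \mathbb{E}[\bX_m^2] \;\preceq\; \bI - \mathbf{H} \;\preceq\; (1-\alpha)\,\bI .
\]

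Next I would set $\Sigma_m := \mathbb{E}[(\bR^m)^\top \bR^m]$, expand $(\bR^m)^\top\bR^m$ from the recursion, and take expectations in two stages. Conditioning first on $\bR^{m-1}$ (and using symmetry of $\bX_m$), the cross terms give $(\bI-\mathbf{H})\bR^{m-1} + (\bR^{m-1})^\top(\bI - \mathbf{H})$ and the quadratic term is controlled by the display above as $\preceq (1-\alpha)(\bR^{m-1})^\top\bR^{m-1}$. Taking the outer expectation and using that $\mathbb{E}[\bR^{m-1}]$ is a truncation of the von Neumann series $\sum_{j\ge 0}(\bI-\mathbf{H})^j$ (cf.\ the first-moment computation \eqref{eq:234}) --- in particular a polynomial in $\mathbf{H}$, hence symmetric and commuting with $\bI-\mathbf{H}$ --- the two cross terms merge into $2(\bI-\mathbf{H})\mathbb{E}[\bR^{m-1}]$, yielding the Loewner recursion
\[
\Sigma_m \;\preceq\; \bI + 2(\bI - \mathbf{H})\,\mathbb{E}[\bR^{m-1}] + (1-\alpha)\,\Sigma_{m-1}.
\]

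To close, I would observe that $(\bI-\mathbf{H})\,\mathbb{E}[\bR^{m-1}] \preceq \big(\sum_{j\ge 1}(1-\alpha)^j\big)\bI = \tfrac{1-\alpha}{\alpha}\bI$, so the first two terms on the right form a positive semidefinite matrix of spectral norm at most $C := 1 + \tfrac{2(1-\alpha)}{\alpha}$; applying $\lambda_{max}(\cdot)$ to the Loewner recursion (subadditive by Weyl) gives the scalar recursion $s_m \le C + (1-\alpha)\,s_{m-1}$ for $s_m := \lambda_{max}(\Sigma_m)$, with $s_0 = \lambda_{max}(\bI) = 1$. Unrolling over $m=1,\dots,k\tau$ and summing the geometric series,
\[
\lambda_{max}\big(\mathbb{E}[(\bR^k)^\top\bR^k]\big) = s_{k\tau} \;\le\; \frac{C}{\alpha} + (1-\alpha)^{k\tau} \;=\; \frac{2-\alpha}{\alpha^2} + (1-\alpha)^{k\tau},
\]
since $\tfrac{C}{\alpha} = \tfrac{1}{\alpha} + \tfrac{2(1-\alpha)}{\alpha^2} = \tfrac{2-\alpha}{\alpha^2}$, which is \eqref{eq:231}.

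The main obstacle --- and the only point where more than the first-moment lemma is needed --- is the quadratic term $\mathbb{E}[(\bI-\bX_m)^2]$. Bounding it requires the per-sample boundedness $\bX_m \preceq \bI$ (not merely $\mathbf{H} \preceq \bI$), and it relies on the sampling genuinely decoupling $\bX_m$ from the accumulated $\bR^{m-1}$ so that the conditional expectation factors; if consecutive samples were correlated one would have to carry an extra covariance term through the recursion.
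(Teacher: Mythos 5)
Your proof is correct and follows essentially the same route as the paper's: expand the one-step recursion for $(\bR^m)^\top\bR^m$, reduce the quadratic term via $(\bI-\bX)^2 \preceq \bI - \bX$ to a $(1-\alpha)$-contraction, absorb the cross terms using the first-moment bound $\lambda_{max}(\mathbb{E}[\bR^{m-1}]) \leq 1/\alpha$, and unroll the resulting scalar geometric recursion. The only substantive difference is that you make explicit the per-sample condition $\mathbf{0} \preceq \bX_m \preceq \bI$ required for the quadratic step (and the independence of $\bX_m$ from $\bR^{m-1}$), which the paper invokes only implicitly by citing Assumption \ref{ass:scaled} even though that assumption constrains $\nabla^2 F$ rather than the individual samples.
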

	\begin{proof}
		We use shorthand for $v_{k+1} = \lambda_{max}(\mathbb{E}[(\mathbf{R}_{k+1})^\top \mathbf{R}_{k+1}])$, $c = 1+\frac{2}{\alpha}$ and $\sigma_{k+1} = v_{k+1} + \frac{2}{\alpha}$. Also, we know that $v_1 \leq 1$. Then using the definition of update in Algorithm \ref{alg:Method1}, we get,
		\begin{eqnarray}
			(\mathbf{R}_{k+1})^\top \mathbf{R}_{k+1} & = &  (\bI + \bR_k^\top(\bI - \bX_j)^\top)(\bI +  (\bI - \bX_j)\bR_k) \\
				& = & \bI + \bR_k^\top(\bI - \bX_j)^\top + (\bI - \bX_j)\bR_k + \bR_k^\top(\bI-\bX_j)^2\bR_k \\
				& \stackrel{Ass \ref{ass:scaled}} \leq & \bI + \bR_k^\top(\bI - \bX_j)^\top + (\bI - \bX_j)\bR_k + \bR_k^\top(\bI-\bX_j)\bR_k \label{eq:222} \\
			v_{k+1} & \stackrel{(v_k),\eqref{eq:strgcnvx}}\leq &  1 + (1-\alpha	)\left(\frac{2}{\alpha} + v_k\right) \\
			v_{k+1} + \frac{2}{\alpha} & \leq & 1 + \frac{2}{\alpha} + (1-\alpha) \left(\frac{2}{\alpha} + v_k\right)\\
			\sigma_{k+1} & \leq & c + (1-\alpha)\sigma_{k} \\
			 & \leq & c\sum_{i=0}^{k\tau-1}(1-\alpha)^i + (1-\alpha)^{k\tau} \\
			 & \leq & \frac{c}{\alpha} + (1-\alpha)^{k\tau}
		\end{eqnarray}
		We remark that given $k$ large it is reasonable to assume stationary and we can check that the calculated bound, assuming stationarity from \eqref{eq:222} matches the non-stationary result. 
	\end{proof}

	\begin{proof}[Proof of Theorem 1]
			We adopt the notation shorthand that $\nabla f(x^k) = g^k$,
			\begin{equation}
			c := \frac{\beta (2-\alpha)^2 + \beta(2-\alpha)(1-\alpha)^{k\tau}}{ (1-\alpha)^2 - (1-\alpha)^{2k\tau}},
			\end{equation}
			and for $k \rightarrow \infty$. Also, we define $c_{\infty}  := \frac{\beta (2-\alpha)^2}{ (1-\alpha)^2}$.			\begin{eqnarray*}
					f(x^{k+1})-f(x^k)  & \stackrel{\eqref{eq:strgcnvx}}\leq &  -\frac{1}{c} \braket{g^k, \mathbf{R}^k g^k} +  \frac{\beta}{2c^2}\braket{ \mathbf{R}^kg^k, \mathbf{R}^k g^k} 	\\
					\mathbb{E}[f(x^{k+1})-f(x^k)]  &  \leq &  -\frac{1}{c} \braket{g^k, \mathbb{E}[\mathbf{R}^k] g^k} +  \frac{\beta}{2c^2}\braket{ g^k, \mathbb{E}[(\mathbf{R}^k)^\top\mathbf{R}^k] g^k} 	\\
					&  \stackrel{\eqref{eq:231},\eqref{eq:22}} \leq & -\frac{  (1-\alpha)^2(1-(1-\alpha)^{2k\tau - 2})  }{c(2\alpha - \alpha^2)} \braket{g^k, g^k} + \frac{\beta}{2c^2}\left(  \frac{(2-\alpha)}{\alpha^2} + (1-\alpha)^{k\tau}\right)\braket{g^k,g^k} 	\\
					& \stackrel{(c)}\leq & -\frac{ (1-\alpha)^4 }{2\beta ( 2-\alpha   )^2 ( (2-\alpha) + \alpha^2(1-\alpha)^{k\tau}}  \braket{g^k,g^k}	\\
					& \stackrel{\eqref{eq:strgcnvx}} \leq &  -\frac{ (1-\alpha)^4 \alpha}{\beta ( 2-\alpha   )^2 ( (2-\alpha) + \alpha^2(1-\alpha)^{k\tau}} (f(x^k) - f(x^*)) \\
					& \stackrel{\eqref{eq:mu}} \leq & -\mu (f(x^k) - f(x^*))
			\end{eqnarray*} 
	\end{proof}
	
	\begin{proof}[Proof of Lemma 1 ]
		Using von Neumann expansion  
		\begin{eqnarray}
			\norm{ \mathbb{E}[\mathbf{R}^k] - (\nabla^2 f(x))^{-1} }_2^2 & \stackrel{\eqref{eq:234}}= & \norm{\sum_{j = k\tau}^{\infty}  (\mathbf{I} - \nabla^2 f(x))^{-1} } \\
			& \leq & (1-\alpha)^{\tau k }\sum_{k=0}^{\infty} \norm{(\mathbf{I} - \nabla^2 f(x))^{-1} }_2 \\
			& \leq &  \frac{(1-\alpha)^{k\tau}}{\alpha}
		\end{eqnarray}
	\end{proof}\hfil
	\begin{proof}[Proof of Theorem 2]
		\begin{equation}
			\Delta x^k := -\mathbf{R}^k \nabla f(x^k)
		\end{equation}
		\begin{eqnarray*}
			\norm{\nabla f(x^{k+1})} & = & \norm{\nabla f(x^{k+1}) + \nabla f(x^{k}) - (\mathbf{R}^k)^{-1}\Delta x^k} \\
			& \leq & \norm{\nabla f(x^{k+1}) + \nabla f(x^{k}) +  \left( \nabla^2 f(x^{k}) - \nabla^2 f(x^{k})- (\mathbf{R}^k)^{-1}  \right)\Delta x^k  }\\	
			&  \leq & \norm{\nabla f(x^{k+1}) + \nabla f(x^{k}) -  \nabla^2 f(x^{k}) \Delta x^k} + \norm{ \left(\nabla^2 f(x^{k}) - (\mathbf{R}^k)^{-1}  \right)\Delta x^k} \\
			& \leq & 2\beta \norm{\Delta x ^k}^2 + \norm{(\mathbf{R}^k)^{-1}} \norm{\nabla^2 f(x^k)}\norm{\nabla^2 f(x^{k})^{-1} - \mathbf{R}^k  } \norm{\Delta x^k} \\
			& \leq & 2\beta \norm{\Delta x ^k}^2 + \beta^2 \norm{\nabla^2 f(x^{k})^{-1} - \mathbf{R}^k  } \norm{\Delta x^k} \\
			& \leq & 2\beta \norm{\Delta x ^k}^2 + \beta^2 \left( \norm{\nabla^2 f(x^{k})^{-1} - \mathbb{E}[\mathbf{R}^k]  } + \norm{\mathbb{E}[\mathbf{R}^k] - \mathbf{R}^k  } \right) \norm{\Delta x^k}\\
			& \leq & 2\frac{\beta}{\alpha^2} \norm{\nabla f(x^k)}^2 + \frac{\beta^2}{\alpha} \left( \norm{\nabla^2 f(x^{k})^{-1} - \mathbb{E}[\mathbf{R}^k]  } + \norm{\mathbb{E}[\mathbf{R}^k] - \mathbf{R}^k  } \right) \norm{\nabla f(x^k)}\\
			\mathbb{E}[\norm{\nabla f(x^{k+1})}] & \stackrel{\eqref{eq:approx}} \leq & 2\frac{\beta}{\alpha^2} \norm{\nabla f(x^k)}^2 + \frac{\beta^2}{\alpha} \left( \frac{(1-\alpha)^{k\tau}}{\alpha} + \frac{\beta-\alpha}{2}   \right) \norm{\nabla f(x^k)} \\
			& = & 2\frac{\beta}{\alpha^2}\norm{\nabla f(x^k)} \left( \norm{\nabla f(x^k)} +  \frac{\beta (1-\alpha)^{k\tau}}{2} + \frac{\alpha \beta (\beta-\alpha)}{4}   \right) \\
			& \stackrel{\eqref{eq:cond}} \leq & 4\frac{\beta}{\alpha^2}\norm{\nabla f(x^k)}^2
		\end{eqnarray*}
		Lastly, \eqref{eq:strgcnvx2} implies,
		\begin{eqnarray}
			\mathbb{E}[f(x^k) - f(x^l)] \leq \frac{\alpha}{8 \beta} \left(\frac{1}{2}\right)^{2^{(k-l)}},
		\end{eqnarray}
		where $l$ is the index such that the \eqref{eq:cond} still holds.
	\end{proof}

\end{document}